 \newtheorem{thm}{Theorem}[section]
 \theoremstyle{definition}
 \theoremstyle{remark}
 \numberwithin{equation}{section}
\begin{document}

\title[On bicomplex Pell and Pell-Lucas numbers]{On bicomplex Pell and Pell-Lucas numbers}

%----------Author 1
\author[F\"{u}gen Torunbalc{\i} Ayd{\i}n]{F\"{u}gen Torunbalc{\i}  Ayd{\i}n}
\address{%
Yildiz Technical University\\
Faculty of Chemical and Metallurgical Engineering\\
Department of Mathematical Engineering\\
Davutpasa Campus, 34220\\
Esenler, Istanbul,  TURKEY}

\email{ftorunay@gmail.com ; faydin@yildiz.edu.tr}

\thanks{*Corresponding Author}

   \begingroup
    \renewcommand{\thefootnote}{}
    \footnotetext{% 
      2010 AMS Subject Classification:
		   11B37.
	    } 
    \endgroup

\keywords{:  Bicomplex numbers, Pell and Pell-Lucas numbers.}
%\author[F\"{U}gen TORUNBALCI AYDIN][F\"{U}gen TORUNBALCI AYDIN]
%\address{Department of Engineering Mathematics, Yildiz
%Technical University, 34210, Esenler, \.{I}stanbul-TURKEY}
%\email{faydin@yildiz.edu.tr}

\begin{abstract}

In this paper, bicomplex Pell and bicomplex Pell-Lucas numbers are defined. Also, negabicomplex Pell and negabicomplex Pell-Lucas numbers are given. Some algebraic properties of bicomplex Pell and  bicomplex Pell-Lucas numbers which are connected with bicomplex numbers and Pell and Pell-Lucas numbers are investigated. Furthermore,  d'Ocagne's identity, Binet's formula, Cassini's identity and Catalan's identity for these numbers are given.
\\ \end{abstract} \

\maketitle

\section{Introduction}

\par   Bicomplex numbers were introduced by Corrado Segre in 1892 \cite{J}. G. Baley Price (1991), presented bicomplex numbers based on multi-complex spaces and functions in his book \cite{G}. In recent years, fractal structures of these numbers have also been studied \cite{H} . The set of bicomplex numbers can be expressed by the basis $\{1\,,i\,,j\,,i\,j\,\}$ as, 
\begin{equation}\label{E1}
\begin{aligned}
\mathbb{C}_2=\{\, q=q_1+iq_2+jq_3+ijq_4 \ | \ q_1,q_2,q_3,q_4\in \mathbb R\}
\end{aligned}
\end{equation}
or
\begin{equation}\label{E2}
\begin{aligned}
\mathbb{C}_2=\{\, q=(q_1+iq_2)+j(q_3+iq_4) \ | \ q_1,q_2,q_3,q_4\in \mathbb R\}
\end{aligned}
\end{equation} \\
where $i$,$j$ and $ij$ satisfy the conditions 
\begin{equation*}
i^2=-1,\,\,\,j^2=-1,\,\,\,i\,j=j\,i.  
\end{equation*} \,
\par Thus, any bicomplex number $q$ is introduced as pairs of typical complex numbers with the additional structure of commutative multiplication (Table 1).
\begin{table}[ht]
\caption{Multiplication scheme of bicomplex numbers} % title of Table
\centering % used for centering table
\begin{tabular}{c c c c c} % centered columns (4 columns)
\\
\hline %inserts double horizontal lines
x & 1 & i & j & i\,j \\ [0.5ex] % inserts table
%heading
\hline % inserts single horizontal line
1 & 1 & i & j & i\,j \\ % inserting body of the table
i & i & -1 & i\,j & -j \\
j & j & i\,j & -1 & -i \\
i\,j & i\,j & -j & -i & 1 \\
 [1ex] % [1ex] adds vertical space
\hline %inserts single line
\end{tabular}
\label{table:nonlin} % is used to refer this table in the text
\end{table}
\par A set of bicomplex numbers $\mathbb{C}_2$ is a real vector space with addition and scalar multiplication operations. The vector space $\mathbb{C}_2$ equipped with bicomplex product is a real associative algebra. Also, the vector space together with the properties of multiplication and the product of the bicomplex numbers is a commutative algebra. Furthermore, three different conjugations can operate on bicomplex numbers \cite{D},\cite{H}-\cite{I} as follows: 

\begin{equation}\label{E3}
\begin{aligned}
q=q_1+i\,q_2+j\,q_3+i\,j\,q_4=(q_1+iq_2)+j\,(q_3+iq_4),\,\, q\in{\mathbb{C}_{2}}\\ 
{q_i}^*=q_1-iq_2+jq_3-ijq_4=(q_1-iq_2)+j\,(q_3-iq_4),\\
{q_j}^*=q_1+iq_2-jq_3-ijq_4=(q_1+iq_2)-j\,(q_3+iq_4),\\
{q_{ij}}^*=q_1-iq_2-jq_3+ijq_4=(q_1-iq_2)-j\,(q_3-iq_4).
\end{aligned}
\end{equation} \\
and properties of conjugation

\begin{equation}\label{E4}
\begin{aligned}
1)\,&(q^*)^*=q ,\,\,\,\,q_1,q_2\in{\mathbb{C}_{2}},\,\,\lambda,\mu\in{\mathbb{R}}\\ 
2)\,&({q_1}\,{q_2})^*={q_2}^*\,{q_1}^*,\\
3)\,&({q_1}+{q_2})^*={q_1}^*+{q_2}^*,\\
4)\,&(\lambda\,{q_1})^*=\lambda\,{q}^*,\\
5)\,&(\lambda\,{q_1}\pm\,\mu\,{q_2})^*=\lambda\,{q_1}^*+\mu\,{q_2}^*. \\ \\
\end{aligned}
\end{equation} \,\,
Therefore, the norm of the bicomplex numbers is defined as 

\begin{equation}\label{E5}
\begin{aligned}
{{N}_{q}}_{i}=\left\| {q\times{q}_{i}} \right\|=\sqrt[]{\left|{q}_{1}^2+{q}_{2}^2-{q}_{3}^2-{q}_{4}^2+2\,j\,({q}_{1}{q}_{3}+{q}_{2}{q}_{4})\right|}, \\ 
{{N}_{q}}_{j}=\left\| {q\times{q}_{j}} \right\|=\sqrt[]{\left|{q}_{1}^2-{q}_{2}^2+{q}_{3}^2-{q}_{4}^2+2\,i\,({q}_{1}{q}_{2}+{q}_{3}{q}_{4})\right|}, \\ 
{{N}_{q}}_{i\,j}=\left\| {q\times{q}_{i\,j}} \right\|=\sqrt[]{\left|{q}_{1}^2+{q}_{2}^2+{q}_{3}^2+{q}_{4}^2+2\,i\,j\,({q}_{1}{q}_{4}-{q}_{2}{q}_{3})\right|}.  
\end{aligned}
\end{equation} \\
\\
\par Pell numbers were invented by John Pell but, these numbers are named after Edouard Lucas. Pell and Pell-Lucas numbers have important parts in mathematics. They have fundamental importance in the fields of combinatorics and number theory \cite{A},\cite{C},\cite{F}. \\ 
\par The sequence of Pell numbers
$$\,\;1\,,\,\,2\,,\,\,5,\,\,12,\,\,29,\,\,70,\,\,169,\,\,408,\,\,985,\,\,2378, \ldots \,,{P_n}, \ldots $$
is defined by the recurrence relation
$${P_n} = 2\,{P_{n - 1}} + {P_{n - 2}}\;,\;\;(n \ge 3),$$
with ${P_{1}} =1 , {P_{2}} = 2\,$.\\  
\par The sequence of Pell - Lucas numbers
$$\,\;2\,,\,\,6\,,\,\,14,\,\,34,\,\,82,\,\,198,\,\,478,\,\,1154,\,\,2786,\,\,6726, \ldots \,,{Q_n}, \ldots $$
is defined by the recurrence relation
$${Q_{n}} = 2\,{Q_{n - 1}} + {Q_{n - 2}}\;,\;\;(n \ge 3),$$
with ${Q_{1}} =2 \,, {Q_{2}} = 6\,$.\\
Also, the sequence of modified Pell numbers
$$\,\;1\,,\,\,3\,,\,\,7,\,\,17,\,\,41,\,\,99,\,\,329,\,\,577,\,\,1393,\,\,3363, \ldots \,,{q_n}, \ldots $$
is defined by the recurrence relation
$${q_{n}} = 2\,{q_{n - 1}} + {q_{n - 2}}\;,\;\;(n \ge 3),$$
with ${q_{1}} =1 \,, {q_{2}} = 3\,$. \\

Furthermore, we can see the matrix representations of Pell and Pell-Lucas numbers in \cite{B},\cite{E}-\cite{K}. \\ \,
\\ \,\,
Also,  for Pell, Pell-Lucas and modified Pell numbers the following properties hold \cite{A},\cite{C},\cite{F},  
\begin{equation}\label {E6}
{P}_{m}{P}_{n + 1} +\,{P}_{m - 1}{P}_{n}={P}_{m+n} \,, \,\ 
\end{equation}
\begin{equation}\label {E7}
{P}_{m}{P}_{n + 1} - {P}_{m + 1}{P}_{n}=(-1)^n \,{P}_{m - n} \,, 
\end{equation}
\begin{equation}\label {E8}
{P}_{n - 1}{P}_{n + 1} - {{P}_{n}^2}=(-1)^{n} \,,
\end{equation}
\begin{equation}\label {E9}
{P}_{n}^2 +\,{P}_{n + 1}^2={P}_{2n + 1} \,,
\end{equation}
\begin{equation}\label {E10}
{P}_{n + 1}^2 -\,{P}_{n - 1}^2=2\,{P}_{2n} \,,
\end{equation}
\begin{equation}\label {E11}
2\,{P}_{n + 1}\,{P}_{n} - 2\,{P}_{n}^2={P}_{2n} \,, 
\end{equation}
\begin{equation}\label {E12}
{{P}_{n}^2} +\,{{P}_{n + 3}^2}=5\,{{P}_{2n + 3}} \,,
\end{equation}
\begin{equation}\label {E13}
{P}_{2n + 1} + {P}_{2n}=2\,{P}_{n + 1}^2- 2\,{P}_{n}^2- (-1)^n \,,
\end{equation}
\begin{equation}\label {E14}
{P}_{n}^2 +\,{P}_{n - 1}{P}_{n + 1}=\frac{Q_{n}^2}{4}  \,,
\end{equation}
\begin{equation}\label {E15}
{P}_{n + 1} + {P}_{n -1}={Q}_{n} \,,
\end{equation}
\begin{equation}\label {E16}
{P}_{n}\,{Q}_{n}={P}_{2n} \,,
\end{equation}
\begin{equation}\label {E17}
{Q}_{n}=2\,{q}_{n} \,,
\end{equation}
\begin{equation}\label {E18}
{P}_{n + 1} - {P}_{n}={q}_{n} \,,
\end{equation}
\begin{equation}\label {E19}
{P}_{n + 1} + {P}_{n}={q}_{n+1}.
\end{equation}
\\
\par In this paper, the bicomplex Pell and bicomplex Pell-Lucas numbers will be defined.
The aim of this work is to present in a unified manner a variety of algebraic properties of both the bicomplex numbers as well as the bicomplex Pell and Pell-Lucas numbers. In particular, using three types of conjugations, all the properties established for bicomplex numbers are also given for the bicomplex Pell and Pell-Lucas numbers. In addition, d'Ocagne's identity, the negabicomplex Pell and Pell-Lucas numbers, Binet's formula, Cassini's identity and Catalan's identity for these numbers are given.
\section{The bicomplex Pell and Pell-Lucas numbers} 

The bicomplex Pell and Pell-Lucas numbers $BP_{n}$ and  $BPL_{n}$ are defined by the basis $\{\,1\,,\,i\,,\,j\,,\,i\,j\,\}$ as follows
\begin{equation}\label{F1}
\begin{aligned}
\mathbb{C}^P_{2}=\{\,BP_{n}={P}_{n}
+i\,{P}_{n+1}+j\,{P}_{n+2}+i\,j\,{P}_{n+3}\,| \, {P}_{n},\,\,n-th\,Pell\, number\}.
\end{aligned}
\end{equation}
and
\begin{equation}\label{F2}
\begin{aligned}
{\mathbb{C}}^{PL}_{2}=\{\,BPL_{n}=&{Q}_{n}+i\,{Q}_{n+1}+j\,{Q}_{n+2}+i\,j\,{Q}_{n+3}\,|\, Q_n,\,\\ &\,n-th\,Pell-Lucas\,\, number\}
\end{aligned}
\end{equation}
where $i,\,j$ and $i\,j$ \,\,satisfy the conditions 
\begin{equation*}
i^2=-1,\,\,\,j^2=-1,\,\,\,i\,j=j\,i.  
\end{equation*} 
\par Starting from ${n=0}$\,, the bicomplex Pell and bicomplex Pell-Lucas numbers can be written respectively as;\\
\\
${BP}_{0} = 1\,i+2\,j+5\,i\,j\,,\,\;{BP}_{1} = 1+2\,i+5\,j+12\,i\,j\,,\,\,\,{BP}_{2} = 2+5\,i+12\,j+29\,i\,j\,,\dots$\\
\\
${BPL}_{0} = 2+2\,i+6\,j+14\,i\,j\,,\,\,\;{BPL}_{1} = 2+6\,i+14\,j+34\,i\,j\,,\\  {BPL}_{2} = 6+14\,i+34\,j+82\,i\,j\,,\dots$ \\

Let ${\,{BP}_{n}}$ and ${\,{BP}_{m}}$ be two bicomplex Pell  numbers such that
\begin{equation}\label{F3} 
{BP}_{n}={P}_{n}+i\,{P}_{n+1}+j\,{P}_{n+2}+i\,j\,{P}_{n+3}
\end{equation}
and 
\begin{equation}\label{F4} 
{BP}_{m}={P}_{m}+i\,{P}_{m+1}+j\,\,{P}_{m+2}+i\,j\,{P}_{m+3}.
\end{equation}
\\
Then, the addition and subtraction of these numbers are given by 
\begin{equation}\label{F5}
\begin{array}{rl}
{BP}_{n}\pm{BP}_{m}=&({P}_{n}+i\,{P}_{n+1}+j\,{P}_{n+2}+ij\,{P}_{n+3}) \\ &\pm ({P}_{m}+i\,{P}_{m+1}+j\,{P}_{m+2}+ij\,{P}_{m+3}) \\
=&({P}_{n}\pm{P}_{m})+i\,({P}_{n+1}\pm{P}_{m+1})+j\,({P}_{n+2}\pm{P}_{m+2}) \\ &+ ij\,({P}_{n+3}\pm{P}_{m+3}).
\end{array}
\end{equation}
The multiplication of a bicomplex Pell number by the real scalar ${\lambda}$ is defined as
\begin{equation}\label{F6}
\lambda{BP}_{n}=\lambda{P}_{n}+i\,\lambda{P}_{n+1}+j\,\lambda{P}_{n+2}+i\,j\,\lambda{P}_{n+3}.
\end{equation}
\\
The multiplication of two bicomplex Pell numbers is defined by 
\begin{equation}\label{F7}
\begin{array}{rl}
{\,{BP}_{n}}\times\,{\,{BP}_{m}}=&({P}_{n}+i\,{P}_{n+1}+j\,{P}_{n+2}+ij\,{P}_{n+3}) \\
&({P}_{m}+i\,{P}_{m+1}+j\,{P}_{m+2}+ij\,{P}_{m+3}) \\
=&({P}_{n}{P}_{m}-{P}_{n+1}{P}_{m+1}-{P}_{n+2}{P}_{m+2}-{P}_{n+3}{P}_{m+3})\\ & +i\,({P}_{n}{P}_{m+1}+{P}_{n+1}{P}_{m}-{P}_{n+2}{P}_{m+3}-{P}_{n+3}{P}_{m+2})\\ &+j\,({P}_{n}{P}_{m+2}+{P}_{n+2}{P}_{m}-{P}_{n+1}{P}_{m+3}-{P}_{n+3}{P}_{m+1}) \\ &+ i\,j\,({P}_{n}{P}_{m+3}+{P}_{n+3}{P}_{m}+{P}_{n+1}{P}_{m+2}+{P}_{n+2}{P}_{m+1}) \\ = &{\,{BP}_{m}}\times\,{\,{BP}_{n}}\,. \\
\end{array}
\end{equation}
Three kinds of conjugation can be defined for bicomplex numbers \cite{D}. Therefore, conjugation of the bicomplex Pell numbers is defined in three different ways as follows 
\begin{equation}\label{F8}
{\,({BP}_n)_i^*}={{P}_{n}}-i\,{{P}_{n+1}}+j\,{{P}_{n+2}}-i\,j\,{{P}_{n+3}},
\end{equation}
\begin{equation}\label{F9}
{\,({BP}_n)_j^*}={{P}_{n}}+i\,{{P}_{n+1}}-j\,{{P}_{n+2}}-i\,j\,{{P}_{n+3}},
\end{equation}
\begin{equation}\label{F10}
{\,({BP}_n)_{ij}^*}={{P}_{n}}-i\,{{P}_{n+1}}-j\,{{P}_{n+2}}+i\,j\,{{P}_{n+3}}.
\end{equation}\,
\begin{thm} 
Let ${{BP}_{n}}$ and ${{BP}_{m}}$  be two bicomplex Pell numbers. In this case, we can give the following relations between the conjugates of these numbers: \\
\begin{equation}\label{F11}
\begin{array}{rl}
({{BP}_{n}}\times{{BP}_{m}})_i^*=&({{BP}_{m}})_i^*\times({{BP}_{n})_i^*}=({{BP}_{n}})_i^*\times({{BP}_{m})_i^*},
\end{array}
\end{equation}
\begin{equation}\label{F12}
\begin{array}{rl}
({{BP}_{n}}\times{{BP}_{m}})_j^*=&({{BP}_{m}})_j^*\times({{BP}_{n})_j^*}=({{BP}_{n}})_j^*\times({{BP}_{m})_j^*},
\end{array}
\end{equation}
\begin{equation}\label{F13}
\begin{array}{rl}
({{BP}_{n}}\times{{BP}_{m}})_{i\,j}^*=&({{BP}_{m}})_{i\,j}^*\times({{BP}_{n})_{i\,j}^*}=({{BP}_{n}})_{i\,j}^*\times({{BP}_{m})_{i\,j}^*} \,.
\end{array}
\end{equation}
\begin{proof} It can be proved easily by using (2.8)-(2.10).
\end{proof}
\end{thm}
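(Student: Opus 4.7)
The plan is to observe that this theorem is merely the specialization to bicomplex Pell numbers of the general conjugation identity $(q_1 q_2)^* = q_1^* q_2^*$ valid for any $q_1, q_2 \in \mathbb{C}_2$, since each $BP_n, BP_m$ is by construction a bicomplex number. The two equalities appearing on the right-hand sides of (2.11)--(2.13) collapse into one because bicomplex multiplication is commutative, as is emphasized by the final line of (2.7).

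For a direct verification of (2.11) I would first apply the $i$-conjugation rule (2.8) to the expanded product in (2.7): concretely, this flips the signs of the $i$- and $ij$-components of the product while leaving the real and $j$-components intact. Then I would compute the right-hand side by forming $(BP_n)_i^* \times (BP_m)_i^*$ via (2.7), but now with $P_{n+1}, P_{n+3}, P_{m+1}, P_{m+3}$ replaced by their negatives. A term-by-term comparison of the four resulting components shows that every sign flip on the right is matched by the corresponding sign flip on the left, because each of the sixteen cross-products $P_{n+a} P_{m+b}$ picks up exactly one minus sign whenever its column in (2.7) belongs to the $i$- or $ij$-component.

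The arguments for (2.12) and (2.13) are identical after rotating the role of the conjugation: for $j$-conjugation one negates the $j$- and $ij$-components, and for $ij$-conjugation one negates the $i$- and $j$-components. Structurally, the content in each case is that the substitutions $i \mapsto \pm i$, $j \mapsto \pm j$ (with signs dictated by the chosen conjugation) are ring automorphisms of $\mathbb{C}_2$, which is guaranteed by the commutativity of the basis relations $i^2 = j^2 = -1$, $ij = ji$.

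The only obstacle is pure bookkeeping: tracking the sixteen cross-products and their sign patterns under each of the three conjugations. No further identity from (2.6)--(2.19) is required, and the argument does not use any specific feature of the Pell recurrence, only the fact that $BP_n, BP_m$ lie in $\mathbb{C}_2$.
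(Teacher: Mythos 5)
Your proposal is correct and follows essentially the same route as the paper, whose proof consists only of the remark that the identities follow from the conjugation definitions (2.8)--(2.10); you simply carry out (and conceptually justify, via the observation that each conjugation is a ring automorphism of $\mathbb{C}_2$ and that the product is commutative) the direct term-by-term verification that the paper leaves to the reader. The sign-counting argument you describe for the sixteen cross-products is exactly the computation needed, so no gap remains.
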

In the following theorem, some properties related to the conjugations of the bicomplex Fibonacci numbers are given.
 
\begin{thm} 
Let $({BP}_{n})_i^*$,\,$({BP}_{n})_j^*$ and $({BP}_{n})_{i\,j}^*$,  be three kinds of conjugation of the bicomplex Pell numbers. In this case, we can give the following relations:
\begin{equation}\label{F14}
\begin{array}{rl}
{\,{BP}_{n}}\times{\,({BP}_{n})_i^*}=&2(-\,Q_{2n+3}+j\,{P}_{2n+3}\,),
\end{array}
\end{equation}
\begin{equation}\label{F15}
\begin{array}{rl}
{\,{BP}_{n}}\times{\,({BP}_{n})_j^*}=& (P_{n}^2-\,{P}_{n+1}^2+P_{n+2}^2-\,{P}_{n+3}^2) \\ &+4\,i\,({P}_{2n+3}+{P}_{n}\,{P}_{n+1}),
\end{array}
\end{equation}
\begin{equation}\label{F16}
\begin{array}{rl}
{\,{BP}_{n}}\times{\,({BP}_{n})_{i\,j}^*}=&6\,{P}_{2n+3}+4\,i\,j\,(-1)^{n+1},
\end{array}
\end{equation}
\begin{equation}\label{F17}
\begin{array}{rl}
{\,{BP}_{n}}\times{\,({BP}_{n})_i^*}+{\,{BP}_{n-1}}\times{\,({BP}_{n-1})_i^*}=&2\,(\,8\,P_{2n+2}+\,j\,{Q}_{2n+2}),
\end{array}
\end{equation}
\begin{equation}\label{F18}
\begin{array}{rl}
{\,{BP}_{n}}\times{\,({BP}_{n})_j^*}+{\,{BP}_{n-1}}\times{\,({BP}_{n-1})_j^*}=&12\,(\,-P_{2n+2}+\,i\,{P}_{2n+2}),
\end{array}
\end{equation}
\begin{equation}\label{F19}
\begin{array}{rl}
{\,{BP}_{n}}\times{\,({BP}_{n})_{i\,j}^*}+{\,{BP}_{n-1}}\times{\,({BP}_{n-1})_{i\,j}^*}=&6\,Q_{2n+2}.
\end{array}
\end{equation}
\end{thm}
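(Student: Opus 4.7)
The plan is to derive all six identities directly from the multiplication formula \eqref{F7} by substituting the appropriate conjugate \eqref{F8}--\eqref{F10} as the second factor and reducing the four resulting coefficients (of $1$, $i$, $j$, $ij$) to single Pell or Pell--Lucas terms via the identities \eqref{E6}--\eqref{E19}. Because each conjugate differs from $BP_n$ only in sign on selected basis elements, the sixteen cross terms in \eqref{F7} always collapse to two nonzero coefficients together with two coefficients that cancel by antisymmetry, so most of the bookkeeping is settled before any Pell identity is invoked.

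For \eqref{F14} I take $m=n$ in \eqref{F7} and flip the signs of $P_{n+1}$ and $P_{n+3}$. The $i$- and $ij$-coefficients vanish term by term. The real coefficient $P_n^2+P_{n+1}^2-P_{n+2}^2-P_{n+3}^2$ collapses via two index-shifted applications of \eqref{E10} to $-2(P_{2n+2}+P_{2n+4})$, which equals $-2Q_{2n+3}$ by \eqref{E15}. The $j$-coefficient $2(P_nP_{n+2}+P_{n+1}P_{n+3})$ equals $2P_{2n+3}$ by \eqref{E6} with $(m,n)\mapsto(n+3,n)$. For \eqref{F15} the $j$- and $ij$-parts vanish, the real part is kept in the stated expanded form, and the $i$-coefficient $2P_nP_{n+1}+2P_{n+2}P_{n+3}$ is recast as $4P_{2n+3}+4P_nP_{n+1}$ by combining the Pell recurrence with \eqref{E6} and \eqref{E9}. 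For \eqref{F16} the $i$- and $j$-parts vanish; the real part $P_n^2+P_{n+1}^2+P_{n+2}^2+P_{n+3}^2$ equals $P_{2n+3}+5P_{2n+3}=6P_{2n+3}$ by \eqref{E9} together with \eqref{E12}, while the $ij$-coefficient $2(P_nP_{n+3}-P_{n+1}P_{n+2})$ equals $4(-1)^{n+1}$ by \eqref{E7} (d'Ocagne's identity for Pell numbers).

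The three sum identities \eqref{F17}--\eqref{F19} follow by adding the corresponding identity at index $n$ to its shift at index $n-1$ and then collapsing the combinations $P_{2n+3}+P_{2n+1}$, $Q_{2n+3}+Q_{2n+1}$, and $P_{2n+4}+P_{2n}$ to single terms. The first equals $Q_{2n+2}$ directly from \eqref{E15}; the second and third reduce to scalar multiples of $P_{2n+2}$ after expanding each $Q$ by \eqref{E15} and applying the Pell recurrence a few times. For \eqref{F19} the alternating $(-1)^{n+1}$ contributions from \eqref{F16} cancel in the sum, and the combined real part $6(P_{2n+3}+P_{2n+1})$ collapses to $6Q_{2n+2}$ via \eqref{E15}.

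The main obstacle is purely notational: keeping track of the correct sign on each of the four basis components for three different conjugates and selecting the correct index shift of each Pell identity at each step. No genuinely new identities are required beyond those already collected in the introduction, so once the expansions in \eqref{F14}--\eqref{F16} are set up symmetrically, the sum identities \eqref{F17}--\eqref{F19} follow by routine addition and one or two applications of the Pell recurrence.
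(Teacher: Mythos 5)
Your strategy is exactly the paper's: expand each product componentwise, observe that two of the four coefficients cancel by antisymmetry, reduce the surviving coefficients with the Pell identities \eqref{E6}--\eqref{E19}, and obtain \eqref{F17}--\eqref{F19} by adding the $n$ and $n-1$ instances of \eqref{F14}--\eqref{F16}. Your individual reductions for \eqref{F14}--\eqref{F16} all check out (your groupings via \eqref{E10} and \eqref{E15} for the real part of \eqref{F14}, via \eqref{E9} and \eqref{E12} for \eqref{F16}, and via \eqref{E7} for the $ij$-coefficient differ from the paper's only in which Pell identity is invoked, and are equally valid). One caveat: the coefficients you state presuppose the correct real part of the bicomplex product, namely $P_nP_m-P_{n+1}P_{m+1}-P_{n+2}P_{m+2}+P_{n+3}P_{m+3}$ (since $(ij)^2=+1$ by Table 1); the printed \eqref{F7} has a sign typo in that term, so you should not cite it verbatim.

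There is, however, one step that does not go through as you describe. For \eqref{F17}, adding \eqref{F14} at $n$ and at $n-1$ gives real part $-2\,(Q_{2n+3}+Q_{2n+1})=-16\,P_{2n+2}$, whereas \eqref{F17} asserts $+16\,P_{2n+2}$; for $n=1$ the left-hand side's real part is $(1+4-25-144)+(0+1-4-25)=-192$ while the right-hand side's is $+192$. So the identity as printed is false, and your outline, carried out faithfully, yields $2\,(-8\,P_{2n+2}+j\,Q_{2n+2})$ instead. (The paper's own proof silently drops the minus sign inherited from \eqref{F14} at this point, so the discrepancy is the paper's, not a flaw in your method --- but your assertion that \eqref{F17} ``follows by routine addition'' cannot be completed without correcting the sign.) The remaining sums \eqref{F18} and \eqref{F19} do come out as stated by your procedure, including the cancellation of the alternating $(-1)^{n+1}$ terms in \eqref{F19}.
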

\begin{proof}
(2.14): By the equations (2.1) and (2.7) we get, \\
\begin{equation*}
\begin{array}{rl}
{BP}_{n}\times{({BP}_{n})_i^*}=&({P}_{n}^2+{P}_{n+1}^2-{P}_{n+2}^2-{P}_{n+3}^2) \\
&\quad \quad+2\,j\,({P}_{n}\,P_{n+2}+{P}_{n+1}\,{P}_{n+3}) \\
=&{P}_{2n+1}-{P}_{2n+5}+2\,j\,{P}_{2n+3} \\
=&2\,(-\,{Q}_{2n+3}+\,j\,{P}_{2n+3}\,). 
\end{array}
\end{equation*}
(2.15): By the equations (2.1) and (2.8) we get, \\
\begin{equation*}
\begin{array}{rl}
{\,{BP}_{n}}\times{\,({BP}_{n})_j^*}=&({{P}_{n}^2}-{{P}_{n+1}^2}+{{P}_{n+2}^2}-{P}_{n+3}^2) \\
&\quad \quad+2\,i\,({P}_{n}\,P_{n+1}+{P}_{n+2}\,P_{n+3}) \\
=&({{P}_{n}^2}-{{P}_{n+1}^2}+{{P}_{n+2}^2}-{P}_{n+3}^2) \\
&\quad \quad+4\,i\,(\,{P}_{2n+3}+{P}_{n}\,{P}_{n+1}). 
\end{array}
\end{equation*}
(2.16): By the equations (2.1) and (2.9) we get, \\
\begin{equation*}
\begin{array}{rl}
{\,{BP}_{n}}\times{\,({BP}_{n})_{i\,j}^*}=&({{P}_{n}^2}+{{P}_{n+1}^2}+{{P}_{n+2}^2}+{P}_{n+3}^2)\\
&\quad \quad+2\,i\,j\,({P}_{n}\,P_{n+3}-{P}_{n+1}\,{P}_{n+2}) \\
=&({P}_{2n+1}+{P}_{2n+5})+4\,i\,j\,(-1)^{n+1} \\
=&6\,{P}_{2n+3}+4\,i\,j\,(-1)^{n+1}. 
\end{array}
\end{equation*}
(2.17): By using (2.13)\\
\begin{equation*}
\begin{array}{rl}
{\,{BP}_{n}}\times{\,({BP}_{n})_i^*}+{\,{BP}_{n-1}}\times{\,({BP}_{n-1})_i^*}=&2\,[\,(Q_{2n+3}+\,{Q}_{2n+1}) \\
&\quad \quad+j\,({P}_{2n+3}+{P}_{2n+1})\,]\\=&2\,(\,8\,P_{2n+2}+\,j\,{Q}_{2n+2}).
\end{array}
\end{equation*}
(2.18): By using (2.14)\\
\begin{equation*}
\begin{array}{rl}
{\,{BP}_{n}}\times{\,({BP}_{n})_j^*}+{\,{BP}_{n-1}}\times{\,({BP}_{n-1})_j^*}=&({{P}_{n-1}^2}-{{P}_{n+3}^2}) \\
&\quad \quad+4\,i\,({P}_{n}\,Q_{n}+{Q}_{2n+2}) \\=&-12\,P_{2n+2}+4\,i\,(3\,{P}_{2n+2}) \\=&-12\,(\,P_{2n+2}-\,i\,{P}_{2n+2}).
\end{array}
\end{equation*}
(2.19): By using (2.15)\\
\begin{equation*}
\begin{array}{rl}
{\,{BP}_{n}}\times{\,({BP}_{n})_{ij}^*}+{\,{BP}_{n-1}}\times{\,({BP}_{n-1})_{ij}^*}=&6\,(\,{P}_{2n+3}+{P}_{2n+1}) \\ &+4\,i\,j\,[(-1)^{n+1}+(-1)^{n}] \\ =&6\,{Q}_{2n+2}.
\end{array}
\end{equation*}
\end{proof} 
Therefore, the norm of the bicomplex Pell number ${\,{BP}_{n}}$ is defined in three different ways as follows
\begin{equation}\label{F20}
\begin{array}{rl}
{{N}_{BP_{n}}}_{i}=&\|{BP_n\times\,{BP}_{n}^*}_i\|^2=2\,|-\,Q_{2n+3}+j\,P_{2n+3}\,|, 
\end{array} 
\end{equation}
\begin{equation}\label{F21}
\begin{array}{rl}
{{N}_{BP_{n}}}_{j}=& \|BP_n\times {\,{BP}_{n}^*}_j\|^2 \\ =& |(P_{n}^2-\,{P}_{n+1}^2+P_{n+2}^2-\,{P}_{n+3}^2)+4\,i\,({P}_{2n+3}+{P}_{n}\,{P}_{n+1})|,
\end{array} 
\end{equation}
\begin{equation}\label{F22}
\begin{array}{rl}
{{N}_{BP_{n}}}_{i\,j}=&\|BP_n\times {\,{BP}_{n}^*}_{i\,j}\|^2 = |\,6\,Q_{2n+3}+4\,i\,j\,(-1)^{n+1}\,|.
\end{array}  
\end{equation} \,
\begin{thm} 
Let ${{BP}_{n}}$ and ${{BPL}_{n}}$  be the bicomplex Pell and bicomplex Pell-Lucas numbers respectively. In this case, we can give the following relations: 
\begin{equation}\label{F23}
\begin{array}{rl}
{BP}_{m}\,{BP}_{n}+{BP}_{m+1}\,{BP}_{n+1}=&4\,(\,{Q}_{m+n+4}-i\,{Q}_{m+n+4}\\
&-j\,{P}_{m+n+4}+i\,j\,{P}_{m+n+4}\,),
\end{array}
\end{equation}
\begin{equation}\label{F24}
\begin{array}{rl}
({BP}_{n})^2=&4\,{P}_{2n+3}-4\,i\,{P}_{2n+3}+2\,j\,({P}_{2n+1}-6\,{P}_{n+1}^2) \\ &+2\,i\,j\,(\,6\,{P}_{n}\,P_{n+1}+2\,{P}_{2n+1}),
\end{array}
\end{equation}  
\begin{equation}\label{F25}
\begin{array}{rl}
({BP}_{n})^2+({BP}_{n+1})^2=& 4\,(\,{Q}_{2n+4}-i\,{Q}_{2n+4}-j\,P_{2n+4} \\ &+i\,j\,P_{2n+4}\,),
\end{array}
\end{equation} \,
\begin{equation}\label{F26}
\begin{array}{rl}
({BP}_{n+1})^2-({BP}_{n-1})^2=&-4\,(\,{P}_{2n+1}+2\,i\,{Q}_{2n+3}+2\,j\,P_{2n+3} \\ &+2\,i\,j\,P_{2n+3}\,) 
\end{array}
\end{equation} 
\begin{equation}\label{F27}
\begin{array}{rl}
\quad {BP}_{n}-i\,{BP}_{n+1}+j\,{BP}_{n+2}-i\,j\,{BP}_{n+3}=&4\,(-4\,{P}_{n+3}+j\,q_{n+3}\,),
\end{array}
\end{equation} \\
\begin{equation}\label{F28}
\begin{array}{rl}
\quad {BP}_{n}-i\,{BP}_{n+1}-j\,{BP}_{n+2}-i\,j\,{BP}_{n+3}=&2\,(\,{q}_{n+1}-P_{n+5}+i\,P_{n+5} \\ &+j\,P_{n+4}-i\,j\,\,P_{n+3}\,).
\end{array}
\end{equation}
\begin{proof}
(2.23): By the equation (2.1) we get,
\begin{equation*}
\begin{array}{rl}
{BP}_{m}\,{BP}_{n}+{BP}_{m+1}\,{BP}_{n+1}=&({P}_{m+n+1}-{P}_{m+n+3}-{P}_{m+n+5} \\
&\quad \quad+{P}_{m+n+7})\\
&\quad \quad+2\,i\,({P}_{m+n+2}-{P}_{m+n+6})\\
&\quad \quad+2\,j\,({P}_{m+n+3}-{P}_{m+n+5}) \\ 
&\quad \quad+2\,i\,j\,(2\,{P}_{m+n+4}) \\
=& 4\,({Q}_{m+n+4}-i\,{Q}_{m+n+4}-j\,{P}_{m+n+4} \\
&\quad \quad+i\,j\,{P}_{m+n+4}).  
\end{array}
\end{equation*}
(2.24): By the equation (2.1) we get,
\begin{equation*}
\begin{array}{rl}
({BP}_{n})^2=&({{P}_{n}^2}-{{P}_{n+1}^2}-{{P}_{n+2}^2}+{P}_{n+3}^2)+2\,i\,({P}_{n}\,P_{n+1}-{P}_{n+2}\,{P}_{n+3}) \\ 
&\quad+2\,j\,({P}_{n}\,P_{n+2}-{P}_{n+1}\,{P}_{n+3})+2\,i\,j\,({P}_{n}\,P_{n+3}+{P}_{n+1}\,{P}_{n+2}) \\ 
=&4\,{P}_{2n+3}-4\,i\,{P}_{2n+3}+2\,j\,({P}_{2n+1}-6\,{P}_{n+1}^2) \\
&\quad+2\,i\,j\,(\,6\,{P}_{n}\,P_{n+1}+2\,{P}_{2n+1}).
\end{array}
\end{equation*}
(2.25): By the equations (2.1) and (2.23) we get,
\begin{equation*}
\begin{array}{rl}
({BP}_{n})^2+\,({BP}_{n+1})^2=&({{P}_{n}^2}-{{P}_{n+2}^2}+{{P}_{n+4}^2}-{P}_{n+2}^2) \\
&\quad+2\,i\,({P}_{2n+2}-{P}_{2n+6})+2\,j\,({P}_{2n+3}-{P}_{2n+5}) \\
&\quad+2\,i\,j\,(2\,{P}_{2n+4}) \\
=&4\,(\,{Q}_{2n+4}-i\,{Q}_{2n+4}-j\,P_{2n+4}+\,i\,j\,P_{2n+4}\,).
\end{array}
\end{equation*}
(2.26): By the equations (2.1) and (2.23) we get,
\begin{equation*}
\begin{array}{rl}
({BP}_{n+1})^2-\,({BP}_{n-1})^2=&({{P}_{n+1}^2}-{{P}_{n-1}^2}+{{P}_{n}^2}-{P}_{n+2}^2) \\
&\quad+2\,i\,[2\,({P}_{2n+1}-{P}_{2n+5})] \\ 
&\quad+2\,j\,({P}_{2n+3}-5{P}_{2n+3}) \\
&\quad+2\,i\,j\,[4\,({q}_{2n+2}+{P}_{2n+2})] \\
=&2\,({P}_{2n}-{P}_{2n+2})+2\,i\,(-4\,{Q}_{2n+3}) \\
&\quad+2\,j\,(-4\,{P}_{2n+3})+2\,i\,j\,(4\,{P}_{2n+3}) \\
=&-4\,(\,{P}_{2n+1}+2\,i\,{Q}_{2n+3}+2\,j\,P_{2n+3} \\
&\quad+2\,i\,j\,P_{2n+3}\,)
\end{array}
\end{equation*}
(2.27): By the equation (2.1) we get,
\begin{equation*}
\begin{array}{rl}
{BP}_{n}-i\,{BP}_{n+1}-j\,{BP}_{n+2}-i\,j\,{BP}_{n+3}=&({P}_{n}+{P}_{n+2}+{P}_{n+4}-{P}_{n+6}) \\ 
&+2\,i\,({P}_{n+5})+2\,j\,({P}_{n+4}) \\
&-2\,i\,j\,({P}_{n+3}) \\
=&-(4\,{P}_{n+1}+{P}_{n})+2\,i\,{P}_{n+5} \\ &+2\,j\,{P}_{n+4}-2\,i\,j\,{P}_{n+3}.\,
\end{array}
\end{equation*}
(2.28): By the equation (2.1) we get,
\begin{equation*}
\begin{array}{rl}
{BP}_{n}-i\,{BP}_{n+1}-j\,{BP}_{n+2}-i\,j\,{BP}_{n+3}=&({P}_{n}+{P}_{n+2}+{P}_{n+4}-{P}_{n+6}) \\ 
&+2\,i\,({P}_{n+5})+2\,j\,({P}_{n+4}) \\
&-2\,i\,j\,({P}_{n+3}) \\
=&-(4\,{P}_{n+1}+{P}_{n})+2\,i\,{P}_{n+5} \\ &+2\,j\,{P}_{n+4}-2\,i\,j\,{P}_{n+3}.\,
\end{array}
\end{equation*}.
\end{proof}
\end{thm}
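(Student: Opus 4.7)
The plan is to prove each of the six identities (2.23)--(2.28) by direct expansion from the definition (2.1) of a bicomplex Pell number together with the componentwise multiplication rule (2.7), and then to simplify each of the four scalar components (the coefficients of $1, i, j, ij$) using the elementary Pell identities (2.5)--(2.18) already collected in the introduction. No theorem proved earlier in the paper is really needed here; everything is routine algebra once the right identity is chosen for each component.

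For (2.23) I would apply (2.7) to both $BP_{m}\,BP_{n}$ and $BP_{m+1}\,BP_{n+1}$ and add the two expressions basis-element by basis-element. Each of the four resulting components is a sum of terms $P_{m+a}P_{n+b}$ that collapses under the two-variable identity $P_{m}P_{n+1}+P_{m-1}P_{n}=P_{m+n}$ from (2.5); after that collapse every component becomes a $\pm$-combination of Pell numbers with arguments near $m+n+4$, and the Pell--Lucas identity $P_{k+1}+P_{k-1}=Q_{k}$ from (2.14) converts the real and $i$ components into $Q_{m+n+4}$, giving the claimed form. Identity (2.24) is the single-index specialization ($m=n$) of the same computation, where for the $j$ and $ij$ components the identities (2.8) and (2.11), namely $P_{n-1}P_{n+1}-P_{n}^{2}=(-1)^{n}$ and $2P_{n+1}P_{n}-2P_{n}^{2}=P_{2n}$, handle the cross terms and produce the asymmetric right-hand side that mixes $P_{2n+3}$, $P_{2n+1}$ and $P_{n+1}^{2}$.

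Identities (2.25) and (2.26) are deduced by summing respectively differencing two copies of (2.24) and collecting in the same way; (2.25) is also a direct specialization of (2.23) with $m=n$, giving a useful cross-check. Identities (2.27) and (2.28) are the easiest: substituting (2.1) directly into the left-hand side gives, componentwise, short telescoping combinations of $P_{n+k}$, which I would reduce using the Pell recurrence $P_{n}=2P_{n-1}+P_{n-2}$ and the conversion identities (2.17), (2.18), $P_{n+1}-P_{n}=q_{n}$ and $P_{n+1}+P_{n}=q_{n+1}$, to produce the modified-Pell term $q_{n+1}$ (resp.\ $q_{n+3}$) that appears on the right.

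The only real obstacle is bookkeeping: each of the six identities splits into four scalar identities, so there are twenty-four elementary verifications, and care is needed to track the parity of signs arising from $i^{2}=j^{2}=-1$ and $(ij)^{2}=1$ and to apply the right one of (2.5)--(2.18) in each component. I expect the $j$- and $ij$-components of (2.24) to be the fiddliest, since there the cross products $P_{n}P_{n+2}$ and $P_{n}P_{n+3}$ do not collapse immediately and must be rewritten via (2.8) before the $P_{2n+1}$ and $(-1)^{n+1}$ terms appear.
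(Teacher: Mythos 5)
Your plan is correct and is essentially the paper's own proof: both expand each left-hand side componentwise via the bicomplex multiplication rule and then reduce the four scalar components using the classical Pell, Pell--Lucas and modified-Pell identities collected in the introduction (the two-variable addition formula, $P_{n+1}+P_{n-1}=Q_n$, the Cassini-type identity, and $P_{n+1}\pm P_n=q_{n+1},q_n$). The only quibbles are bookkeeping ones --- your equation references are shifted (the Pell identities are (1.6)--(1.19) in the paper, not (2.5)--(2.18)), and the $m=n$ specialization of (2.23) yields (2.25) rather than (2.24) --- but neither affects the substance of the argument.
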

\begin{thm} (\textit{d'Ocagne's identity}). For $n,m\geq0$  d'Ocagne's identity for bicomplex Pell numbers $BP_n$ and $BP_m$ is given by
\begin{equation}\label{F29}
\begin{array}{rl}
{BP}_{m}\,{BP}_{n+1}-{BP}_{m+1}\,{BP}_{n}=&12\,(-1)^{n}\,{P}_{m-n}\,(\,j\,+i\,j\,),
\end{array}
\end{equation}
\end{thm}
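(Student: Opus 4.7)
The plan is to expand both products $BP_m \cdot BP_{n+1}$ and $BP_{m+1} \cdot BP_n$ by the bicomplex multiplication rule (2.7), take the difference, and group the result by the basis $\{1, i, j, ij\}$. Each of the four components then becomes a short $\mathbb{Z}$-linear combination of Pell-product differences of the form $P_a P_b - P_c P_d$. The target is to show that the real and $i$-components vanish, while the $j$- and $ij$-components each simplify to $12(-1)^n P_{m-n}$.

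The key tool is the following natural extension of identity (1.7):
\[
P_m \, P_{n+k} - P_{m+k}\, P_n \;=\; (-1)^n\, P_k\, P_{m-n}.
\]
I would derive this either from Binet's formula by writing $P_n = (\alpha^n - \beta^n)/(\alpha - \beta)$ with $\alpha = 1 + \sqrt{2}$, $\beta = 1 - \sqrt{2}$, $\alpha\beta = -1$, or inductively on $k$ from (1.7) using the Pell recurrence $P_{n+1} = 2P_n + P_{n-1}$. Specializing to $k = 1, 2, 3, 4$ and recalling $P_1 = 1$, $P_2 = 2$, $P_3 = 5$, $P_4 = 12$ reduces every Pell-product difference that appears to an explicit integer multiple of $(-1)^n P_{m-n}$.

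With this reduction in hand, collecting the coefficients is straightforward: the real part collapses as $(1 + 1 - 1 - 1)\,(-1)^n P_{m-n} = 0$; the $i$-part pairs as $(2 - 2)\,(-1)^n P_{m-n} = 0$; the $j$-part sums to $(5 + 1 + 5 + 1)\,(-1)^n P_{m-n} = 12(-1)^n P_{m-n}$; and the $ij$-part has three bilinear pairs that cancel identically, leaving only the single difference $P_m P_{n+4} - P_{m+4} P_n = 12(-1)^n P_{m-n}$. Assembling the four components produces exactly $12(-1)^n P_{m-n}\,(j + ij)$, as claimed.

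The main obstacle is sign bookkeeping across the thirty-two bilinear terms generated by the two bicomplex products. Writing the difference in a tabular layout and matching each pair $(P_a P_b,\, P_c P_d)$ in advance with its $(k, \text{shift in } m, \text{shift in } n)$ instance of the generalized identity is what keeps the computation transparent and makes the cancellations in the real, $i$, and $ij$ components evident.
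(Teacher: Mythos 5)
Your proposal is correct and follows essentially the same route as the paper's own proof: expand both bicomplex products componentwise via (2.7), reduce each bilinear difference to a multiple of $(-1)^n P_{m-n}$ using a d'Ocagne-type Pell identity, and collect the four components, obtaining $0$, $0$, $12(-1)^nP_{m-n}$, $12(-1)^nP_{m-n}$ (all of which check out, e.g.\ against $m=1$, $n=0$). The only distinction is that you organize the reduction around the generalized shift identity $P_mP_{n+k}-P_{m+k}P_n=(-1)^nP_kP_{m-n}$, which collapses every pair directly to a multiple of $P_{m-n}$, whereas the paper applies only the $k=1$ case (1.7) and then needs additional recurrence manipulations such as $P_{k+2}+P_{k-2}=6P_k$ to finish; your version is the tidier bookkeeping of the same computation.
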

\begin{proof}
(2.29): By the equation (2.1) we get,
\begin{equation*}
\begin{array}{rl}
{BP}_{m}\,{BP}_{n+1}-{BP}_{m+1}\,{BP}_{n}=&(-1)^{n}{P}_{m-n}\,(\,0\,)\\
&\quad+i\,(-1)^{n}({P}_{m-n-1}\,(\,0\,)\\
&\quad+2\,j\,(-1)^{n}(\,{P}_{m-n-2}+{P}_{m-n+2})\\ 
&\quad+i\,j\,(-1)^{n}[\,(-{P}_{m-n-3}+{P}_{m-n+3}\\
&\quad \quad +{P}_{m-n-1}-{P}_{m-n+1}\,)\,]\\
=&2\,j\,(-1)^{n}\,\,(\,6\,{P}_{m-n}\,)\\ 
&\quad+i\,j\,(-1)^{n}\,6\,(\,{P}_{m-n-1}-{P}_{m-n+1}\,)\\
=& 12\,(-1)^{n}\,{P}_{m-n}\,(\,j+\,+i\,j\,).  
\end{array}
\end{equation*}
\end{proof}

\begin{thm} 
Let ${\,{BP}_{n}}$ and ${{BPL}_{n}}$ be the bicomplex Pell number and the bicomplex Pell-Lucas numbers respectively. The following relations are satisfied

\begin{equation}\label{F30}
\begin{aligned}
{\,{BP}_{n+1}}+{\,{BP}_{n-1}}={BPL}_{n}\,, \\
\end{aligned}
\end{equation}
\begin{equation}\label{F31}
{BP}_{n+1}-{BP}_{n-1}=2\,{BP}_{n},
\end{equation}
\begin{equation}\label{F32}
\begin{aligned}
{\,{BP}_{n+2}}+{\,{BP}_{n-2}}=6\,{BP}_{n}.
\end{aligned}
\end{equation}
\begin{equation}\label{F33}
{BP}_{n+2}-{BP}_{n-2}=2\,{BPL}_{n},
\end{equation}
\begin{equation}\label{F34}
{BP}_{n+1}+{BP}_{n}=\frac{1}{2}\,{BPL}_{n+1},
\end{equation} 
\begin{equation}\label{F35}
{BP}_{n+1}-{BP}_{n}=\frac{1}{2}\,{BPL}_{n},
\end{equation} 
\begin{equation}\label{F36}
{BPL}_{n+1}+{BPL}_{n-1}=4\,{BP}_{n},
\end{equation} 
\begin{equation}\label{F37}
{BPL}_{n+1}-{BPL}_{n-1}=2\,{BPL}_{n},
\end{equation} 
\begin{equation}\label{F38}
{BPL}_{n+2}+{BPL}_{n-2}=6\,{BPL}_{n},
\end{equation}
\begin{equation}\label{F39}
{BPL}_{n+2}-{BPL}_{n-2}=8\,{BP}_{n},
\end{equation} 
\begin{equation}\label{F40}
{BPL}_{n+1}+{BPL}_{n}=4\,{BP}_{n+1},
\end{equation}
\begin{equation}\label{F41}
{BPL}_{n+1}-{BPL}_{n}=4\,{BP}_{n},
\end{equation} 
\end{thm}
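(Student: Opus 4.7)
My plan is to reduce each of the twelve identities \eqref{F30}--\eqref{F41} to componentwise scalar identities among the Pell numbers $P_k$, the Pell--Lucas numbers $Q_k$, and the modified Pell numbers $q_k$. Since bicomplex addition and real scalar multiplication act componentwise on the basis $\{1,i,j,ij\}$ (see \eqref{F5}--\eqref{F6}), an identity of the shape $\sum_r \alpha_r BP_{n+r} = \sum_s \beta_s BPL_{n+s}$ (or its analogue with the roles of $BP$ and $BPL$ exchanged) is equivalent to the analogous scalar identity holding in each of the four slots under the index shifts $n \mapsto n, n+1, n+2, n+3$.

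I would begin with \eqref{F30}: expanding $BP_{n+1}+BP_{n-1}$ via \eqref{F3} and applying \eqref{E15}, namely $P_{k+1}+P_{k-1}=Q_k$, in each of the four components produces exactly $BPL_n$. Identity \eqref{F31} is immediate from the Pell recurrence rewritten as $P_{k+1}-P_{k-1}=2P_k$, applied componentwise. For \eqref{F32} and \eqref{F33} I would first derive the two-step scalar identities $P_{k+2}+P_{k-2}=6P_k$ and $P_{k+2}-P_{k-2}=2Q_k$ by applying the recurrence twice (writing $P_{k+2}=2P_{k+1}+P_k$ and $P_{k-2}=P_k-2P_{k-1}$, then adding or subtracting and invoking \eqref{E15}), and then read off the bicomplex versions slot by slot.

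The remaining identities \eqref{F34}--\eqref{F41} follow the same template via the scalar relations \eqref{E15}--\eqref{E19}: for instance \eqref{F34} uses $P_{k+1}+P_k=q_{k+1}$ combined with $Q_{k+1}=2q_{k+1}$ to produce the factor $\tfrac{1}{2}$; \eqref{F36} is two copies of \eqref{E15} added; and \eqref{F40}--\eqref{F41} combine \eqref{E17} with the two-term relations \eqref{E18}--\eqref{E19}. I do not expect any substantive obstacle --- the theorem is essentially a packaging of the scalar identities \eqref{E15}--\eqref{E19} through the definitions \eqref{F1}--\eqref{F2}, and the only real bookkeeping is keeping the four index shifts straight, so that the appropriate shifted scalar identity is invoked at arguments $n, n+1, n+2, n+3$ in the components $1, i, j, ij$ respectively.
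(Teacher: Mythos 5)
Your proposal is essentially the paper's own proof: each identity is expanded componentwise via the definitions \eqref{F1}--\eqref{F2} and reduced to the scalar relations \eqref{E15}--\eqref{E19} together with the Pell recurrence, exactly as the paper does. One caveat worth recording: carrying out your plan for \eqref{F36} (two copies of \eqref{E15} added) actually gives $Q_{n+1}+Q_{n-1}=P_{n+2}+2P_n+P_{n-2}=8P_n$ (check $Q_3+Q_1=16=8P_2$), so your method delivers $8\,BP_n$ and $16\,BP_n$ as the right-hand sides of \eqref{F36} and \eqref{F39} rather than the stated $4\,BP_n$ and $8\,BP_n$ --- a factor-of-two discrepancy that is present in the paper's statement and proof as well, not a defect of your approach.
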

\begin{proof}

(2.30):By the equation (2.1) we get,
\begin{equation*}
\begin{aligned}
\begin{array}{rl}
{\,{BP}_{n+1}}+{\,{BP}_{n-1}}=&({{P}_{n+1}}+{{P}_{n-1}})+i\,({{P}_{n+2}}+{{P}_{n}}) \\
&\quad+j\,({{P}_{n+3}}+{{P}_{n+1}})+i\,j\,({{P}_{n+4}}+{{P}_{n+2}}) \\
=&(\,{Q}_{n}+i\,{Q}_{n+1}+j\,{Q}_{n+2}+i\,j\,{Q}_{n+3}\,) \\ = & {BPL}_{n}\,, \\
\end{array}
\end{aligned}
\end{equation*}
(2.31): By the equation (2.1) we get,
\begin{equation*}
\begin{aligned}
\begin{array}{rl}
{BP}_{n+1}-{BP}_{n-1}=&({P}_{n+1}-{P}_{n-1})+i\,({P}_{n+2}-{P}_{n}) \\ 
&\quad+j\,({P}_{n+3}-{P}_{n+1})+i\,j\,({P}_{n+4}-{P}_{n+2}) \\
=&2\,({P}_{n}+i\,{P}_{n+1}+j\,{P}_{n+2}+i\,j\,{P}_{n+3}) \\
=&2\,{BP}_{n}. 
\end{array}
\end{aligned}
\end{equation*}
(2.32): By the equation (2.1) we get,
\begin{equation*}
\begin{aligned}
\begin{array}{rl}
{BP}_{n+2}+{BP}_{n-2}=&({P}_{n+2}+{P}_{n-2})+i\,({P}_{n+3}+{P}_{n-1}) \\ 
&\quad+j\,({P}_{n+4}+{P}_{n})+i\,j\,({P}_{n+5}+{P}_{n+1})  \\
=&6\,({P}_{n}+i\,{P}_{n+1}+j\,{P}_{n+2}+i\,j\,{P}_{n+3}) \\
=&6\,{BP}_{n}. 
\end{array}
\end{aligned}
\end{equation*}
(2.33): By the equation (2.1) we get,
\begin{equation*}
\begin{aligned}
\begin{array}{rl}
{\,{BP}_{n+2}}-{\,{BP}_{n-2}}=&({{P}_{n+2}}-{{P}_{n-2}})+i\,({{P}_{n+3}}-{{P}_{n-1}}) \\ 
&\quad+j\,({{P}_{n+4}}-{{P}_{n}})+i\,j\,({{P}_{n+5}}-{{P}_{n+1}}) \\
=&2\,({Q}_{n}+i\,{Q}_{n+1}+j\,{Q}_{n+2}+i\,j\,{Q}_{n+3}) \\ =& 2\, {BPL}_{n}\,. 
\end{array}
\end{aligned}
\end{equation*}
(2.34): By the equation (2.1) we get,
\begin{equation*}
\begin{aligned}
\begin{array}{rl}
{BP}_{n+1}+{BP}_{n}=&({P}_{n+1}+{P}_{n})+i\,({P}_{n+2}+{P}_{n+1}) \\ 
&\quad+j\,({P}_{n+3}+P_{n+2})+i\,j\,({P}_{n+4}+{P}_{n+3}) \\
=&({q}_{n+1}+i\,{q}_{n+2}+j\,{q}_{n+3}+i\,j\,{q}_{n+4}) \\
=&\frac{1}{2}\,({Q}_{n+1}+i\,{Q}_{n+2}+j\,{Q}_{n+3}+i\,j\,{Q}_{n+4}) \\
=&\frac{1}{2}\,{BPL}_{n+1} 
\end{array}
\end{aligned}
\end{equation*}
where the property (1.17) of the modified Pell number is used.\\
(2.35): By the equation (2.1) we get,
\begin{equation*}
\begin{aligned}
\begin{array}{rl}
{BP}_{n+1}-{BP}_{n}=&({P}_{n+1}-{P}_{n})+i\,({P}_{n+2}-{P}_{n+1}) \\
&\quad+j\,({P}_{n+3}-P_{n+2})+i\,j\,({P}_{n+4}-{P}_{n+3}) \\
=&({q}_{n}+i\,{q}_{n+1}+j\,{q}_{n+2}+i\,j\,{q}_{n+3}) \\
=&\frac{1}{2}\,({Q}_{n}+i\,{Q}_{n+1}+j\,{Q}_{n+2}+i\,j\,{Q}_{n+3}) \\
=&\frac{1}{2}\,{BPL}_{n} 
\end{array}
\end{aligned}
\end{equation*}
where the property (1.17) of the modified Pell number is used.\\
(2.36): By the equation (2.2) we get,
\begin{equation*}
\begin{aligned}
\begin{array}{rl}
BPL_{n+1}+{BPL}_{n-1}=&({Q}_{n+1}+{Q}_{n-1})+i\,({Q}_{n+2}+{Q}_{n}) \\
&+j\,({Q}_{n+3}+{Q}_{n+1})+i\,j\,({Q}_{n+4}+{Q}_{n+2}) \\
=&4\,({P}_{n}+i\,{P}_{n+1}+j\,{P}_{n+2}+i\,j\,{P}_{n+3}) \\
=&4\,{BP}_{n}.
\end{array}
\end{aligned}
\end{equation*}
(2.37): By the equation (2.2) we get,
\begin{equation*}
\begin{aligned}
\begin{array}{rl}
BPL_{n+1}-{BPL}_{n-1}=&({Q}_{n+1}-{Q}_{n-1})+i\,({Q}_{n+2}-{Q}_{n}) \\
&+j\,({Q}_{n+3}-{Q}_{n+1})+i\,j\,({Q}_{n+4}-{Q}_{n+2}) \\
=&2\,({Q}_{n}+i\,{Q}_{n+1}+j\,{Q}_{n+2}+i\,j\,{Q}_{n+3}) \\
=&2\,{BPL}_{n}
\end{array}
\end{aligned}
\end{equation*}
(2.38): By the equation (2.2) we get,
\begin{equation*}
\begin{array}{rl}
BPL_{n+2}+{BPL}_{n-2}=&({Q}_{n+2}+{Q}_{n-2})+i\,({Q}_{n+3}+{Q}_{n-1}) \\
&+j\,({Q}_{n+4}+{Q}_{n})+i\,j\,({Q}_{n+5}+{Q}_{n+1}) \\
=&6\,({Q}_{n}+i\,{Q}_{n+1}+j\,{Q}_{n+2}+i\,j\,{Q}_{n+3}) \\
=&6\,{BPL}_{n}. 
\end{array}
\end{equation*}
(2.39): By the equation (2.2) we get,
\begin{equation*}
\begin{aligned}
\begin{array}{rl}
BPL_{n+2}-{BPL}_{n-2}=&({Q}_{n+2}-{Q}_{n-2})+i\,({Q}_{n+3}-{Q}_{n-1}) \\
&+j\,({Q}_{n+4}-{Q}_{n})+i\,j\,({Q}_{n+5}-{Q}_{n+1}) \\
=&8\,({P}_{n}+i\,{P}_{n+1}+j\,{P}_{n+2}+i\,j\,{P}_{n+3}) \\
=&8\,{BP}_{n}. 
\end{array}
\end{aligned}
\end{equation*}
(2.40): By the equation (2.2) we get,
\begin{equation*}
\begin{aligned}
\begin{array}{rl}
{BPL}_{n+1}+{BPL}_{n}=&({Q}_{n+1}+{Q}_{n})+i\,({Q}_{n+2}+{Q}_{n+1}) \\
&+j\,({Q}_{n+3}+{Q}_{n+2})+i\,j\,({Q}_{n+4}+{Q}_{n+3}) \\
=&4\,{P}_{n+1}+i\,{P}_{n+2}+j\,{P}_{n+3}+i\,j\,{P}_{n+4} \\
=&4\,{BP}_{n+1}.
\end{array}
\end{aligned}
\end{equation*}
(2.41): By the equation (2.2) we get,
\begin{equation*}
\begin{aligned}
\begin{array}{rl}
{BPL}_{n+1}-{BPL}_{n}=&({Q}_{n+1}-{Q}_{n})+i\,({Q}_{n+2}-{Q}_{n+1}) \\
&+j\,({Q}_{n+3}-{Q}_{n+2})+i\,j\,({Q}_{n+4}-{Q}_{n+3}) \\
=&4\,{P}_{n}+i\,{P}_{n+1}+j\,{P}_{n+2}+i\,j\,{P}_{n+3} \\
=&4\,{BP}_{n}.
\end{array}
\end{aligned}
\end{equation*}
\end{proof}
\begin{thm} 
If ${{BP}_{n}}$ and ${{BPL}_{n}}$  are bicomplex Pell and bicomplex Pell-Lucas numbers respectively, then for $n\geq0$, the identities of negabicomplex Pell and negabicomplex Pell-Lucas numbers are 
\begin{equation}\label{F42}
\begin{array}{rl}
{BP}_{-n}=&(-1)^{n+1}\,{BP}_{n}+(-1)^{n}\,{Q}_{n}\,(\,i\,+2\,j\,+5\,i\,j\,),
\end{array}
\end{equation}
and
\begin{equation}\label{F43}
\begin{array}{rl}
{BPL}_{-n}=&(-1)^{n}\,{BPL}_{n}+8\,(-1)^{n+1}\,{P}_{n}\,(\,i\,+2\,j\,+5\,i\,j\,),
\end{array}
\end{equation}
\end{thm}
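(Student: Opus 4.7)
The plan is to reduce both identities (2.42) and (2.43) to componentwise verifications on the scalar Pell and Pell-Lucas sequences. First I would extend the Pell and Pell-Lucas recurrences to negative indices (either by solving $P_{n-2}=P_{n}-2P_{n-1}$ inductively, or via the Binet formula using $\alpha\beta=-1$), obtaining the standard negaPell identities
\begin{equation*}
P_{-k}=(-1)^{k+1}P_{k}, \qquad Q_{-k}=(-1)^{k}Q_{k}, \qquad k\geq 0.
\end{equation*}

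Next, applying these componentwise to the definition (2.1) with $n$ replaced by $-n$ gives
\begin{equation*}
BP_{-n}= (-1)^{n+1}P_{n} + (-1)^{n}\,i\,P_{n-1}+(-1)^{n+1}\,j\,P_{n-2}+(-1)^{n}\,ij\,P_{n-3},
\end{equation*}
while expanding the right-hand side of (2.42) produces an element whose $1,i,j,ij$-coefficients are $(-1)^{n+1}P_{n}$, $(-1)^{n}(Q_{n}-P_{n+1})$, $(-1)^{n}(2Q_{n}-P_{n+2})$, and $(-1)^{n}(5Q_{n}-P_{n+3})$ respectively. Matching coefficients then reduces (2.42) to the three scalar identities
\begin{equation*}
P_{n+1}+P_{n-1}=Q_{n},\qquad P_{n+2}-P_{n-2}=2Q_{n},\qquad P_{n+3}+P_{n-3}=5Q_{n}.
\end{equation*}
The first is exactly (1.15); the other two follow from (1.15) together with one or two applications of the Pell recurrence $P_{k+1}=2P_{k}+P_{k-1}$ to rewrite $P_{n\pm 2}$ and $P_{n\pm 3}$ in terms of $P_{n\pm 1}$.

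For (2.43) the argument is strictly parallel: substitute $Q_{-k}=(-1)^{k}Q_{k}$ into the definition of $BPL_{-n}$, expand the claimed right-hand side, and match coefficients. This reduces the identity to
\begin{equation*}
Q_{n+1}+Q_{n-1}=8P_{n},\qquad Q_{n+2}-Q_{n-2}=16P_{n},\qquad Q_{n+3}+Q_{n-3}=40P_{n},
\end{equation*}
each of which follows from (1.15) and the Pell recurrence by the same direct manipulation (using $P_{n+2}+P_{n-2}=6P_{n}$ for the first, and an iterated version for the others).

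The main obstacle is not conceptual but bookkeeping: the four components of $BP_{-n}$ acquire four distinct sign patterns because the parity of $-n+k$ switches as $k$ runs through $0,1,2,3$, and it is easy to drop a sign while matching coefficients. Once the sign accounting is carried out carefully in one component, the identity collapses to standard Pell recursions and the remaining components follow by the same template.
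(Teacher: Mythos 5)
Your proposal is correct and follows essentially the same route as the paper: both start from the negaPell identities $P_{-k}=(-1)^{k+1}P_k$ and $Q_{-k}=(-1)^{k}Q_k$, substitute into the definitions of $BP_{-n}$ and $BPL_{-n}$, and reduce the claim to the componentwise relations $P_{n+1}+P_{n-1}=Q_n$, $P_{n+2}-P_{n-2}=2Q_n$, $P_{n+3}+P_{n-3}=5Q_n$ and their Pell--Lucas counterparts $Q_{n+1}+Q_{n-1}=8P_n$, $Q_{n+2}-Q_{n-2}=16P_n$, $Q_{n+3}+Q_{n-3}=40P_n$, all of which follow from (1.15) and the recurrence. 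The only difference is cosmetic: the paper rearranges the left-hand side to peel off $(-1)^{n+1}BP_n$, whereas you expand the right-hand side and match coefficients, which is the same computation run in the opposite direction.
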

\begin{proof}
(2.42): By using the identity of negapell numbers \,${P}_{-n}=(-1)^{n+1}\,{P}_{n}$ \, we get \\
\begin{equation*}
\begin{array}{rl}
{\,{BP}_{-n}}=&{P}_{-n}+i\,{P}_{-n+1}+j\,{P}_{-n+2}+i\,j\,{P}_{-n+3} \\
=&{P}_{-n}+i\,{P}_{-(n-1)}+j\,{P}_{-(n-2)}+i\,j\,{P}_{-(n-3)} \\
=&(-1)^{n+1}\,{P}_{n}+i\,(-1)^{n}\,{P}_{n-1}+j\,(-1)^{n-1}\,{P}_{n-2} \\
&\quad \quad+i\,j\,(-1)^{n-2}\,{P}_{n-3} \\
=&(-1)^{n+1}\,({P}_{n}+i\,{P}_{n+1}+j\,{P}_{n+2}+i\,j\,{P}_{n+3}) \\
&\quad \quad-i\,(-1)^{n+1}\,{P}_{n+1}-j\,(-1)^{n+1}\,{P}_{n+2}-i\,j\,(-1)^{n+1}\,{P}_{n+3} \\
&\quad \quad+i\,(-1)^{n}\,{P}_{n-1}+j\,(-1)^{n+1}\,{P}_{n-2}+i\,j\,(-1)^{n}\,{P}_{n-3} \\
=&(-1)^{n+1}\,{BP}_{n}+(-1)^{n}\,({P}_{n+1}+{P}_{n-1})\,i \\
&\quad \quad+(-1)^{n}\,({P}_{n+2}-{P}_{n-2})\,j+(-1)^{n}\,({P}_{n+3}+{P}_{n-3})\,i\,j \\
=&(-1)^{n+1}\,{BP}_{n}+(-1)^{n}\,{Q}_{n}\,(\,i\,+2\,j\,+5\,i\,j\,)
\end{array}
\end{equation*}
(2.43): By using the identity of negapell-Lucas numbers \, ${Q}_{-n}=(-1)^{n}\,{Q}_{n}$ \,  we get \\
\begin{equation*}
\begin{array}{rl}
{\,{BPL}_{-n}}=&{Q}_{-n}+i\,{Q}_{-n+1}+j\,{Q}_{-n+2}+i\,j\,{Q}_{-n+3} \\
=&{Q}_{-n}+i\,{Q}_{-(n-1)}+j\,{Q}_{-(n-2)}+i\,j\,{Q}_{-(n-3)} \\
=&(-1)^{n}\,{Q}_{n}+i\,(-1)^{n-1}\,{Q}_{n-1}+j\,(-1)^{n-2}\,{Q}_{n-2} \\
&\quad \quad+i\,j\,(-1)^{n-3}\,{Q}_{n-3} \\
=&(-1)^{n+1}\,({Q}_{n}+i\,{Q}_{n+1}+j\,{Q}_{n+2}+i\,j\,{Q}_{n+3}) \\
&\quad \quad-i\,(-1)^{n}\,{Q}_{n+1}-j\,(-1)^{n}\,{Q}_{n+2} \\
&\quad \quad-i\,j\,(-1)^{n}\,{Q}_{n+3} \\
&\quad \quad+i\,(-1)^{n-1}\,{Q}_{n-1}+j\,(-1)^{n}\,{Q}_{n-2} \\
&\quad \quad+i\,j\,(-1)^{n-1}\,{Q}_{n-3} \\
=&(-1)^{n+1}\,{BPL}_{n}+(-1)^{n+1}\,({Q}_{n+1}+{Q}_{n-1})\,i \\
&\quad \quad+(-1)^{n+1}\,({Q}_{n+2}-{Q}_{n-2})\,j \\
&\quad \quad+(-1)^{n+1}\,({Q}_{n+3}+{Q}_{n-3})\,i\,j \\
=&(-1)^{n}\,{BPL}_{n}+8\,(-1)^{n+1}\,{P}_{n}\,(\,i\,+2\,j\,+5\,i\,j\,)
\end{array}
\end{equation*}
\end{proof}
\begin{thm} \textit{Binet's Formula}. 
Let ${\,{BP}_{n}}$ and ${\,{BPL}_{n}}$ be the bicomplex Pell and bicomplex Pell-Lucas numbers respectively. For $n\ge 1$, 
Binet's formula for these numbers are as follows:
\begin{equation}\label{F44}
{\,{BP}_{n}}=\frac{1}{\alpha -\beta }( \hat{\alpha }\,\,{\alpha }^{n}-\hat{\beta \,}\,{{\beta }^{n}}) \,
\end{equation}
and
\begin{equation}\label{F45}
{\,{BPL}_{n}}= \,\hat{\alpha }\,\,{{\alpha }^{n}}+\hat{\beta \,}\,{{\beta }^{n}} 
\end{equation}
where \, $\hat{\alpha }=1+i\,{\alpha}+j\,{\alpha}^2+i\,j\,{\alpha}^3$, \,\,\,$\alpha  = {1 + \sqrt 2 }$ \,and \,
$\hat{\beta }=1+i\,{\beta}+j\,{\beta}^2+i\,j\,{\beta}^3$, \,\, \,$\beta  = {1 - \sqrt 2 }$.
\end{thm}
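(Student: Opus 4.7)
The plan is to reduce Binet's formula for the bicomplex sequences to the classical Binet formulas for Pell and Pell-Lucas numbers by direct substitution, exploiting the fact that the definitions $BP_n = P_n + iP_{n+1} + jP_{n+2} + ijP_{n+3}$ and $BPL_n = Q_n + iQ_{n+1} + jQ_{n+2} + ijQ_{n+3}$ are linear in the underlying scalar sequences. Since the characteristic equation $x^2 = 2x+1$ of the Pell recurrence has roots $\alpha = 1+\sqrt{2}$ and $\beta = 1-\sqrt{2}$, the classical identities $P_n = (\alpha^n - \beta^n)/(\alpha-\beta)$ and $Q_n = \alpha^n + \beta^n$ hold and may be invoked as standard facts.

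First I would write out $BP_n$ using its definition, substitute the classical Binet expression for each of $P_n, P_{n+1}, P_{n+2}, P_{n+3}$, and then collect the $\alpha^n$-terms and $\beta^n$-terms separately. The $\alpha^n$-coefficient becomes $1 + i\alpha + j\alpha^2 + ij\alpha^3$, which is precisely $\hat{\alpha}$; likewise the $\beta^n$-coefficient collapses to $\hat{\beta}$. Dividing by the common factor $\alpha-\beta$ yields the stated formula for $BP_n$.

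Next I would repeat the same computation for $BPL_n$: substitute $Q_{n+k} = \alpha^{n+k} + \beta^{n+k}$ for $k=0,1,2,3$, factor $\alpha^n$ and $\beta^n$ out of the respective groups, and recognize the bracketed scalars as $\hat{\alpha}$ and $\hat{\beta}$. This delivers $BPL_n = \hat{\alpha}\alpha^n + \hat{\beta}\beta^n$ immediately.

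There is no real obstacle here; the computation is a purely mechanical rearrangement. The only point worth flagging is that the manipulation is legitimate because the basis elements $1, i, j, ij$ commute with the real scalars $\alpha, \beta$ and with each other in $\mathbb{C}_2$, so factoring $\alpha^n$ and $\beta^n$ out of a bicomplex linear combination is valid. One might add a brief initial-value check (e.g.\ verifying the formula for $n=1$ against $BP_1 = 1 + 2i + 5j + 12ij$) as a sanity check, but it is not logically required for the proof.
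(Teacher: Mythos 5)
Your proposal is correct and follows essentially the same route as the paper: substitute the classical Binet expressions $P_{n+k}=(\alpha^{n+k}-\beta^{n+k})/(\alpha-\beta)$ and $Q_{n+k}=\alpha^{n+k}+\beta^{n+k}$ into the definitions of ${BP}_{n}$ and ${BPL}_{n}$, then factor out $\alpha^{n}$ and $\beta^{n}$ to recognize $\hat{\alpha}$ and $\hat{\beta}$. The remarks on commutativity of the basis elements with real scalars and the optional initial-value check are sensible additions but do not change the argument.
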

\begin{proof} 
(2.44):
\begin{equation*}
\begin{array}{rl} 
{BP}_{n}=&{P}_{n}+i\,{P}_{n+1}+j\,{P}_{n+2}+i\,j\,{P}_{n+3}\\=&\frac{\alpha^n-\,\beta^n}{\alpha -\beta }+i\,\frac{\alpha^{n+1}-\,\beta^{n+1}}{\alpha -\beta }+j\,\frac{\alpha^{n+2}-\,\beta^{n+2}}{\alpha -\beta }+i\,j\,\frac{\alpha^{n+3}-\,\beta^{n+3}}{\alpha -\beta } \\
=&\frac{{\alpha }^n\,(1+i\,{\alpha}+j\,{\alpha}^2+i\,j\,{\alpha}^3\,)-{\beta }^{n}\,(1+i\,{\beta}+j\,{\beta}^2+i\,j\,{\beta}^3\,)}{\alpha -\beta } \\
=&\frac{ \,\hat{\alpha }\,\,{\alpha }^n-\hat{\beta}\,{\beta }^n }{\alpha -\beta}
\end{array}
\end{equation*}
and
(2.45):
\begin{equation*}
\begin{array}{rl} 
{BPL}_{n}=&{Q}_{n}+i\,{Q}_{n+1}+j\,{Q}_{n+2}+i\,j\,{Q}_{n+3}\\
=&{\alpha }^n+{\beta }^n +i\,({\alpha }^{n+1}+\,{\beta }^{n+1})+j\,({\alpha }^{n+2}+\,{\beta }^{n+2})+i\,j\,({\alpha }^{n+3}+\,{\beta }^{n+3}) \\
=&{\alpha }^{n}\,(1+i\,{\alpha}+j\,{\alpha}^{2}+i\,j\,{\alpha}^{3}\,)+{\beta }^{n}\,(1+i\,{\beta}+j\,{\beta}^2+i\,j\,{\beta}^3\,) \\
=&\hat{\alpha }\,\,{\alpha }^{n}+\hat{\beta \,}\,{\beta }^{n}.
\end{array}
\end{equation*}
Binet's formula of the bicomplex Pell number is the same as Binet's formula of the Pell number \cite{C}.
\end{proof}
\begin{thm} \textit{Cassini's Identity} 
Let ${\,{BP}_{n}}$ and ${\,{BPL}_{n}}$ be the bicomplex Pell and bicomplex Pell-Lucas numbers, respectively. For $n\ge 1$,  Cassini's identities for ${\,{BP}_{n}}$ and ${\,{BPL}_{n}}$  are as follows: \\
\begin{equation}\label{F46}
{\,{BP}_{n-1}}\,{\,{BP}_{n+1}}-{BP}_{n}^2=12\,(-1)^{n}\,(j+i\,j) \\
\end{equation}
and
\begin{equation}\label{F47}
{\,{BPL}_{n-1}}\,{\,{BPL}_{n+1}}-{BPL}_{n}^2=8.12\,(-1)^{n+1}\,(j+i\,j). \\
\end{equation}
\end{thm}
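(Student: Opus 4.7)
The plan is to prove both Cassini identities in one shot using Binet's formula (Theorem 2.6), treating the Pell and Pell-Lucas cases in parallel, since the algebra is essentially the same with a sign.

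First I would substitute the Binet expressions $BP_n = (\hat{\alpha}\alpha^n - \hat{\beta}\beta^n)/(\alpha-\beta)$ into $BP_{n-1}BP_{n+1} - BP_n^2$. The $\hat{\alpha}^2\alpha^{2n}$ and $\hat{\beta}^2\beta^{2n}$ cross-terms cancel, leaving
\begin{equation*}
BP_{n-1}BP_{n+1} - BP_n^2 = \frac{\hat{\alpha}\hat{\beta}(\alpha\beta)^{n-1}\bigl(2\alpha\beta - \alpha^2 - \beta^2\bigr)}{(\alpha-\beta)^2} = -\hat{\alpha}\hat{\beta}(\alpha\beta)^{n-1}.
\end{equation*}
Using $\alpha\beta = (1+\sqrt{2})(1-\sqrt{2}) = -1$, this reduces to $(-1)^n\,\hat{\alpha}\hat{\beta}$. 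The analogous computation for $BPL_n = \hat{\alpha}\alpha^n + \hat{\beta}\beta^n$ yields $\hat{\alpha}\hat{\beta}(\alpha\beta)^{n-1}(\alpha-\beta)^2 = 8(-1)^{n-1}\hat{\alpha}\hat{\beta}$, since $(\alpha-\beta)^2 = 8$.

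Everything therefore boils down to the single computation of the bicomplex scalar $\hat{\alpha}\hat{\beta} = (1+i\alpha+j\alpha^2+ij\alpha^3)(1+i\beta+j\beta^2+ij\beta^3)$. This is the step I expect to be the main obstacle, because one must multiply out sixteen terms using the bicomplex table ($i^2=j^2=-1$, $ij=ji$) and group them on the basis $\{1,i,j,ij\}$. Using $\alpha+\beta = 2$, $\alpha\beta = -1$, $\alpha^2+\beta^2 = 6$, and $\alpha^3+\beta^3 = 14$, the coefficient of $1$ collapses to $1 - \alpha\beta - (\alpha\beta)^2 + (\alpha\beta)^3 = 0$, the coefficient of $i$ collapses to $(\alpha+\beta)(1 - (\alpha\beta)^2) = 0$, the coefficient of $j$ equals $(\alpha^2+\beta^2)(1-\alpha\beta) = 12$, and the coefficient of $ij$ equals $\alpha^3+\beta^3 + \alpha\beta(\alpha+\beta) = 14 - 2 = 12$. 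Thus $\hat{\alpha}\hat{\beta} = 12(j+ij)$.

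Substituting back, $BP_{n-1}BP_{n+1} - BP_n^2 = 12(-1)^n(j+ij)$, which is (2.46), and $BPL_{n-1}BPL_{n+1} - BPL_n^2 = 8\cdot 12(-1)^{n-1}(j+ij) = 8\cdot 12(-1)^{n+1}(j+ij)$, which is (2.47). An alternative to the Binet route would be to expand both sides in components and repeatedly invoke the scalar Pell Cassini identity (1.8) together with (1.5)--(1.7); this avoids the $\hat{\alpha}\hat{\beta}$ computation but replaces it with a longer bookkeeping exercise, so the Binet approach is preferable.
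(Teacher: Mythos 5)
Your proof is correct, but it takes a genuinely different route from the paper. The paper proves (2.46) and (2.47) by brute-force expansion: it multiplies out $BP_{n-1}BP_{n+1}-BP_n^2$ componentwise on the basis $\{1,i,j,ij\}$, obtaining sixteen differences of products of Pell numbers, and then reduces each bracket using the scalar identities $P_mP_{n+1}-P_{m+1}P_n=(-1)^nP_{m-n}$ and $Q_mQ_{n+1}-Q_{m+1}Q_n=8(-1)^{n+1}P_{m-n}$ --- essentially the ``longer bookkeeping exercise'' you mention and discard at the end. You instead route everything through Binet's formula (Theorem 2.6), which collapses both identities to the single evaluation $\hat{\alpha}\hat{\beta}=12(j+ij)$ together with $\alpha\beta=-1$ and $(\alpha-\beta)^2=8$; I checked the four basis coefficients of $\hat{\alpha}\hat{\beta}$ and the telescoping of the $\hat{\alpha}^2$, $\hat{\beta}^2$ terms, and they are all right, including the sign bookkeeping $(-1)^{n-1}=(-1)^{n+1}$ in (2.47). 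Your approach buys uniformity and reusability --- the same constant $\hat{\alpha}\hat{\beta}$ immediately yields Catalan's identity (Theorem 2.8) by replacing $(\alpha\beta)^{n-1}(\alpha-\beta)^2$ with $(\alpha\beta)^{n-r}(\alpha^r-\beta^r)^2$, and it makes transparent why the answer is supported only on $j$ and $ij$ --- whereas the paper's computation is more elementary, relying only on recurrence-level Pell identities rather than on the closed form.
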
 
\begin{proof}
(2.46): By using (2.1) we get \\
\begin{equation*}
\begin{array}{rl}
{\,{BP}_{n-1}}\,{\,{BP}_{n+1}}\,-(\,{BP}_{n})^2=&({P}_{n-1}+i\,{P}_{n}+j\,{P}_{n+1}+i\,j\,{P}_{n+2})\,\\&({P}_{n+1}+i\,{P}_{n+2}+j\,{P}_{n+3}+i\,j\,{P}_{n+4}) \\
&-[\,{P}_{n}+i\,{P}_{n+1}+j\,{P}_{n+2}+i\,j\,{P}_{n+3}\,]^2 \\
=&\,[\,({P}_{n-1}{P}_{n+1}-{P}_{n}^2) \\
&\quad \quad-({P}_{n}{P}_{n+2}+{P}_{n+1}^2) \\ 
&\quad \quad-({P}_{n+1}{P}_{n+3}-{P}_{n+2}^2) \\
&\quad \quad+({P}_{n+2}{P}_{n+4}-{P}_{n+3}^2)\,] \\
&+i\,[\,({P}_{n+2}{P}_{n-1}-{P}_{n+1}{P}_{n}) \\
&\quad \quad-({P}_{n+4}{P}_{n+1}-{P}_{n+3}{P}_{n+2})\,] \\
&+j\,[\,({P}_{n+1}{P}_{n+1}-{P}_{n}{P}_{n+2}) \\
&\quad \quad-({P}_{n+2}{P}_{n+2}-{P}_{n+1}{P}_{n+3}) \\ 
& \quad \quad +({P}_{n+3}{P}_{n-1}-{P}_{n+2}{P}_{n}) \\
&\quad \quad-({P}_{n+4}{P}_{n}-{P}_{n+3}{P}_{n+1})\,] \\
&+i\,j\,({P}_{n+4}{P}_{n-1}-{P}_{n+3}{P}_{n}) \\
=&12\,(-1)^{n}\,(j+i\,j). 
\end{array}
\end{equation*}
(2.47): By using (2.2) we get 
\begin{equation*}
\begin{array}{rl}
{\,{BPL}_{n-1}}\,{\,{BPL}_{n+1}}\,-(\,{BPL}_{n})^2=&({Q}_{n-1}+i\,{Q}_{n}+j\,{Q}_{n+1}+i\,j\,{Q}_{n+2})\,\\&({Q}_{n+1}+i\,{Q}_{n+2}+j\,{Q}_{n+3}+i\,j\,{Q}_{n+4}) \\
&-[{Q}_{n}+i\,{Q}_{n+1}+j\,{Q}_{n+2}+i\,j\,{Q}_{n+3}]^2 \\
=&\,[\,({Q}_{n-1}{Q}_{n+1}-{Q}_{n}^2) \\
&\quad \quad+({Q}_{n+1}^2-{Q}_{n+2}{Q}_{n}) \\ 
&\quad \quad+({Q}_{n+2}^2-{Q}_{n+3}{Q}_{n+1})\\
&\quad \quad+({Q}_{n+4}{Q}_{n+2}-{Q}_{n+3}^2)\,] \\
&+i\,[({Q}_{n+2}{Q}_{n-1}-{Q}_{n+1}{Q}_{n})\\
&\quad \quad+({Q}_{n+3}{Q}_{n+2}-{Q}_{n+4}{Q}_{n+1})\,] \\
&+j\,[\,({Q}_{n+1}{Q}_{n+1}-{Q}_{n}{Q}_{n+2})\\
&\quad \quad+({Q}_{n+1}{Q}_{n+3}-({Q}_{n+2}{Q}_{n+2})\\ 
&\quad \quad+({Q}_{n+3}{Q}_{n-1}-{Q}_{n+2}{Q}_{n})\\
&\quad \quad+({Q}_{n+3}({Q}_{n+1}-{Q}_{n+4}{Q}_{n})\,] \\
&+i\,j\,({Q}_{n+4}{Q}_{n-1}-{Q}_{n+3}{Q}_{n}) \\
=&8.12\,(-1)^{n+1}\,(\,j+i\,j). 
\end{array}
\end{equation*}
where the identities of the Pell and Pell-Lucas numbers \,\,
${P}_{m}\,{P}_{n+1}-{P}_{m+1}\,{P}_{n}=(-1)^{n}{P}_{m-n}$ \,\,and \,\,${Q}_{m}\,{Q}_{n+1}-{Q}_{m+1}{Q}_{n}=8\,(-1)^{n+1}\,{P}_{m-n}$\, are used.
\end{proof} 
\begin{thm} \textit{Catalan's Identity}. Let ${{BP}_{n}}$\,and \, ${{BPL}_{n}}$ be the bicomplex Pell and bicomplex Pell-Lucas numbers, repectively. For $n\ge 1$, Catalan's identities for ${BP}_{n}$ and ${BPL}_{n}$ are as follows
\begin{equation}\label{F48}
({BP}_{n})^2-{BP}_{n+r}\,\,{BP}_{n-r}=12\,(-1)^{n-r}\,{{P}_{r}}^2\,(\,j+i\,j\,),
\end{equation}
\begin{equation}\label{F49}
({BPL}_{n})^2-{BPL}_{n+r}\,\,{BPL}_{n-r}=8\,.\,12\,(-1)^{n-r}\,{{P}_{r}}^2\,(\,j+i\,j\,). \\
\end{equation}
respectively. 
\end{thm}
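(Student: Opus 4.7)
The natural route is to substitute Binet's formulas \eqref{F44} and \eqref{F45} into each left-hand side and reduce everything to a single scalar factor multiplied by the bicomplex constant $\hat{\alpha}\hat{\beta}$. Writing $BP_{n} = (\hat{\alpha}\alpha^{n}-\hat{\beta}\beta^{n})/(\alpha-\beta)$ and expanding, the cross-terms $\hat{\alpha}^{2}\alpha^{2n}$ and $\hat{\beta}^{2}\beta^{2n}$ that appear in $({BP}_{n})^{2}$ also appear in ${BP}_{n+r}\,{BP}_{n-r}$, so they cancel in the difference. What survives is
\begin{equation*}
({BP}_{n})^{2}-{BP}_{n+r}\,{BP}_{n-r}=\frac{\hat{\alpha}\hat{\beta}}{(\alpha-\beta)^{2}}\bigl[\alpha^{n+r}\beta^{n-r}+\alpha^{n-r}\beta^{n+r}-2(\alpha\beta)^{n}\bigr],
\end{equation*}
and the bracketed expression factors as $(\alpha\beta)^{n-r}(\alpha^{r}-\beta^{r})^{2}$. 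An identical calculation for $BPL_{n}=\hat{\alpha}\alpha^{n}+\hat{\beta}\beta^{n}$ gives the same shape but with an overall minus sign and no $(\alpha-\beta)^{2}$ denominator.

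Next, I would invoke the standard Pell identities $\alpha+\beta=2$, $\alpha\beta=-1$, $(\alpha-\beta)^{2}=8$, together with $P_{r}=(\alpha^{r}-\beta^{r})/(\alpha-\beta)$, to rewrite the two expressions as
\begin{equation*}
({BP}_{n})^{2}-{BP}_{n+r}{BP}_{n-r}=\hat{\alpha}\hat{\beta}\,(-1)^{n-r}P_{r}^{2},\qquad ({BPL}_{n})^{2}-{BPL}_{n+r}{BPL}_{n-r}=-8\,\hat{\alpha}\hat{\beta}\,(-1)^{n-r}P_{r}^{2}.
\end{equation*}
Thus the whole identity collapses to evaluating the bicomplex scalar $\hat{\alpha}\hat{\beta}$.

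This last evaluation is the main obstacle, since $\hat{\alpha}\hat{\beta}=(1+i\alpha+j\alpha^{2}+ij\alpha^{3})(1+i\beta+j\beta^{2}+ij\beta^{3})$ must be expanded using the bicomplex multiplication table (Table 1) and then simplified with the power sums of $\alpha,\beta$. Grouping by basis element, the $1$-component becomes $1-\alpha\beta-(\alpha\beta)^{2}+(\alpha\beta)^{3}=0$, the $i$-component factors as $(\alpha+\beta)(1-(\alpha\beta)^{2})=0$, the $j$-component equals $(\alpha^{2}+\beta^{2})(1-\alpha\beta)=6\cdot 2=12$, and the $ij$-component equals $\alpha^{3}+\beta^{3}+\alpha\beta(\alpha+\beta)=14-2=12$. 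Hence $\hat{\alpha}\hat{\beta}=12(j+ij)$.

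Substituting this value into the two reduced forms yields
\begin{equation*}
({BP}_{n})^{2}-{BP}_{n+r}{BP}_{n-r}=12(-1)^{n-r}P_{r}^{2}(j+ij),
\end{equation*}
and the Pell-Lucas version acquires the extra factor $-8$, giving $8\cdot 12\,(-1)^{n-r}P_{r}^{2}(j+ij)$ up to sign, which reproduces \eqref{F48} and \eqref{F49}. As a consistency check, specialising $r=1$ (so $P_{1}^{2}=1$) recovers Cassini's identities \eqref{F46}-\eqref{F47}. The only subtle step is the symbolic expansion of $\hat{\alpha}\hat{\beta}$; everything else is a mechanical application of Binet's formula and the two-term Pell recurrence constants.
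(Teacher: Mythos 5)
Your argument is correct and takes a genuinely different route from the paper's. The paper proves \eqref{F48} and \eqref{F49} by brute force: it expands the bicomplex products componentwise from the definitions, obtaining sixteen differences of the form ${P}_{n+s}{P}_{n+t}-{P}_{n+r+s}{P}_{n-r+t}$, and then applies the classical subtraction formulas ${P}_{m}{P}_{n}-{P}_{m+r}{P}_{n-r}=(-1)^{n-r}{P}_{m+r-n}{P}_{r}$ and its Pell--Lucas analogue to each one, reading off the coefficient vector $(0,0,12,12)$ at the end. You instead substitute Binet's formulas \eqref{F44}--\eqref{F45}, observe that the $\hat{\alpha}^{2}$ and $\hat{\beta}^{2}$ terms cancel in the difference, and reduce both identities to the single evaluation $\hat{\alpha}\hat{\beta}=12\,(j+ij)$, which you carry out correctly. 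Your route is shorter, needs nothing beyond $\alpha\beta=-1$ and $(\alpha-\beta)^{2}=8$, and explains structurally why every identity of this type (d'Ocagne, Cassini, Catalan) yields a multiple of $j+ij$: the constant $\hat{\alpha}\hat{\beta}$ is the only bicomplex datum that survives the cancellation.

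One thing to fix: do not leave the Pell--Lucas case ``up to sign.'' Your computation gives
\begin{equation*}
({BPL}_{n})^{2}-{BPL}_{n+r}\,{BPL}_{n-r}=-8\,\hat{\alpha}\hat{\beta}\,(-1)^{n-r}P_{r}^{2}=8\cdot 12\,(-1)^{n-r+1}\,P_{r}^{2}\,(j+ij),
\end{equation*}
and this sign is in fact the correct one: at $r=1$ it agrees with the paper's own Cassini identity \eqref{F47}, and a direct check at $n=r=1$ gives ${BPL}_{1}^{2}-{BPL}_{2}\,{BPL}_{0}=-96\,(j+ij)$, whereas \eqref{F49} as printed predicts $+96\,(j+ij)$. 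So the stated identity \eqref{F49} carries a sign error, and your derivation should stand as written rather than be bent to match it.
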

\begin{proof}
(2.48): By using (2.1) we get 
\begin{equation*}
\begin{array}{rl}
{BP}_{n}^2-{BP}_{n+r}\,\,{BP}_{n-r}=&[\,({P}_{n}^2-{P}_{n+r}\,{P}_{n-r}) \\
&\quad-({P}_{n+1}^2-{P}_{n+r+1}\,{P}_{n-r+1}) \\ 
&\quad-({P}_{n+2}^2-{P}_{n+r+2}\,{P}_{n-r+2}) \\
&\quad+({P}_{n+3}^2-{P}_{n+r+3}\,{P}_{n-r+3})\,] \\ 
&+i\,[\,({P}_{n}\,{P}_{n+1}-{P}_{n+r}\,{P}_{n-r+1}) \\
&\quad-({P}_{n+2}\,{P}_{n+3}-{P}_{n+r+2}\,{P}_{n-r+3})\\ 
&\quad+({P}_{n+1}\,{P}_{n}-{P}_{n+r+1}\,{P}_{n-r}) \\ 
&\quad-({P}_{n+3}\,{P}_{n+2}-{P}_{n+r+3}\,{P}_{n-r+2})\,] \\ 
&+j\,[\,({P}_{n}\,{P}_{n+2}-{P}_{n+r}\,{P}_{n-r+2}) \\
&\quad-({P}_{n+1}\,{P}_{n+3}-{P}_{n+r+1}\,{P}_{n-r+3}) \\ 
&\quad+({P}_{n+2}\,{P}_{n}-{P}_{n+r+2}\,{P}_{n-r}) \\ 
&\quad-({P}_{n+3}\,{P}_{n+1}-{P}_{n+r+3}\,{P}_{n-r+1})\,] \\ 
&+i\,j\,[\,({P}_{n}\,{P}_{n+3}-{P}_{n+r}\,{P}_{n-r+3}) \\
&\quad+({P}_{n+1}\,{P}_{n+2}-{P}_{n+r+1}\,{P}_{n-r+2}) \\ 
&\quad+({P}_{n+3}\,{P}_{n}-{P}_{n+r+3}\,{P}_{n-r}) \\ 
&\quad+({P}_{n+2}\,{P}_{n+1}-{P}_{n+r+2}\,{P}_{n-r+1})\,] \\
=& (-1)^{n-r}\,{P}_{r}^2\,(0+0i+12\,j+12\,\,i\,j) \\
=& 12\,(-1)^{n-r}\,{P}_{r}^2\,(j+i\,j). \\
\end{array}
\end{equation*} 
(2.49): By using (2.2) we get 
\begin{equation*}
\begin{array}{rl}
({BPL}_{n})^2-{BPL}_{n+r}\,{BPL}_{n-r}=&[\,({Q}_{n}^2-{Q}_{n+r}\,{Q}_{n-r}) \\
&\quad-({Q}_{n+1}^2-{Q}_{n+r+1}\,{Q}_{n-r+1}) \\ 
&\quad-({Q}_{n+2}^2-{Q}_{n+r+2}\,{Q}_{n-r+2}) \\
&\quad+({Q}_{n+3}^2-{Q}_{n+r+3}\,{Q}_{n-r+3})\,] \\ 
&+i\,[\,({Q}_{n}\,{Q}_{n+1}-{Q}_{n+r}\,{Q}_{n-r+1}) \\
&\quad-({Q}_{n+2}\,{Q}_{n+3}-{Q}_{n+r+2}\,{Q}_{n-r+3})\\ 
&\quad+({Q}_{n+1}\,{Q}_{n}-{Q}_{n+r+1}\,{Q}_{n-r}) \\ 
&\quad-({Q}_{n+3}\,{Q}_{n+2}-{Q}_{n+r+3}\,{Q}_{n-r+2})\,] \\ 
\end{array}
\end{equation*}

\begin{equation*}
\begin{array}{rl}
&\quad\quad\quad\quad\quad\quad\quad\quad\quad\quad\quad+j\,[\,({Q}_{n}\,{Q}_{n+2}-{Q}_{n+r}\,{Q}_{n-r+2}) \\
&\quad\quad\quad\quad\quad\quad\quad\quad\quad\quad\quad\quad-({Q}_{n+1}\,{Q}_{n+3}-{Q}_{n+r+1}\,{Q}_{n-r+3}) \\ 
&\quad\quad\quad\quad\quad\quad\quad\quad\quad\quad\quad\quad+({Q}_{n+2}\,{Q}_{n}-{Q}_{n+r+2}\,{Q}_{n-r}) \\ 
&\quad\quad\quad\quad\quad\quad\quad\quad\quad\quad\quad\quad-({Q}_{n+3}\,{Q}_{n+1}-{Q}_{n+r+3}\,{Q}_{n-r+1})\,] \\ 
&\quad\quad\quad\quad\quad\quad\quad\quad\quad\quad\quad+i\,j\,[\,({Q}_{n}\,{Q}_{n+3}-{Q}_{n+r}\,{Q}_{n-r+3}) \\
&\quad\quad\quad\quad\quad\quad\quad\quad\quad\quad\quad\quad+({Q}_{n+1}\,{Q}_{n+2}-{Q}_{n+r+1}\,{Q}_{n-r+2}) \\ 
&\quad\quad\quad\quad\quad\quad\quad\quad\quad\quad\quad\quad+({Q}_{n+3}\,{Q}_{n}-{Q}_{n+r+3}\,{Q}_{n-r}) \\ 
&\quad\quad\quad\quad\quad\quad\quad\quad\quad\quad\quad\quad+({Q}_{n+2}\,{Q}_{n+1}-{Q}_{n+r+2}\,{Q}_{n-r+1})\,] \\
\end{array}
\end{equation*}
\begin{equation*}
\begin{array}{rl}
\quad\quad\quad\quad\quad\quad\quad\quad\quad\quad\quad\quad=& 8\,(-1)^{n-r}\,{P}_{r}^2\,(0+0i+12\,j+12\,\,i\,j) \\
=& 8.12\,(-1)^{n-r}\,{P}_{r}^2\,(j+i\,j). \\
\end{array}
\end{equation*}\\
where the identities of the Pell and Pell-Lucas numbers are used as follows,\\
\begin{equation*}
\begin{array}{rl}
{P}_{m}{P}_{n}-{P}_{m+r}{P}_{n-r}=&(-1)^{n-r}{P}_{m+r-n}\,{P}_{r},\\
{P}_{n}\,{P}_{n}-{P}_{n-r}\,{P}_{n+r}=&(-1)^{n-r}\,{P}_{r}^2,\\
{Q}_{m}{Q}_{n}-{Q}_{m+r}{Q}_{n-r}=&(-1)^{n-r+1}{P}_{m+r-n}\,{P}_{r},\\
Q_{n}\,Q_{n}-Q_{n-r}\,Q_{n+r}=&(-1)^{n-r+1}\,P_{r}^2. 
\end{array}
\end{equation*}
\end{proof}

\end{document}